\newcommand*{\rom}[1]{\expandafter\@slowromancap\romannumeral #1@}
\theoremstyle{definition}
\newtheorem{fact}{fact}
\newtheorem{thm}[fact]{Theorem}
\newtheorem{prop}[fact]{Proposition}
\newtheorem{corollary}[fact]{Corollary}
\newtheorem{defini}[fact]{Definition}
\title{A note on Autoreducibility for Infinite Time Register Machines and parameter-free Ordinal Turing Machines}
\author{Merlin Carl }
\date{\today} 
\begin{document}
\maketitle

\begin{abstract} We propose a notion of autoreducibility for infinite time computability and explore it and its connection with a notion of randomness for infinite time machines introduced
in \cite{CaSc} and \cite{Ca3}.
\end{abstract} 

\section{Autoreducibility for Infinite Time Register Machines}

The classical notion of autoreducibility can, for example, be found in \cite{DoHi}.
We consider how this concept behaves in the context of infinitary machine models of computations.
For the time being, we focus on Infinite Time Register Machines ($ITRM$s) (see \cite{ITRM} and \cite{ITRM2}) and ordinal Turing machines (see \cite{Ko}) - but the notion of course
makes sense for other types like the Infinite Time Turing Machines ($ITTM$s, see \cite{HaLe}) as well.

\begin{defini}
For $x\in ^{\omega}2$, we define $x_{\setminus n}$ as $x$ with its $n$th bit deleted (i.e. the bits up to $n$ are the same, the further bits are shifted one place to the left).
We say that $x$ is $ITRM$-autoreducible iff there is an $ITRM$-program $P$ such that $P^{x_{\setminus n}}(n)\downarrow=x(n)$ for all $n\in\omega$.
$x$ is called totally incompressible 
iff it is not $ITRM$-autoreducible, i.e. there is no $ITRM$-program $P$ such that $P^{x_{\setminus n}}(n)\downarrow=x(n)$ for all $n\in\omega$. If there is such a program,
then we say that $P$ autoreduces $x$, $P$ is an autoreduction for $x$ or that $x$ is autoreducible via $P$.
\end{defini}

\begin{defini}
 $x\in ^{\omega}2$ is $ITRM$-random in the measure sense iff there is no $ITRM$-decidable set $X$ of Lebesgue measure $0$ such that $x\in X$.
$x\in ^{\omega}2$ is $ITRM$-random in the meager sense iff there is no $ITRM$-decidable meager set $X$ such that $x\in X$.
\end{defini}

We refer the reader to \cite{CaSc} and \cite{Ca3} for more information on $ITRM$-randomness, including that used in the course of this note.

For the notion of $ITRM$-recognizability, we refer the reader to \cite{ITRM2}, \cite{Ca} or \cite{Ca2}.

\begin{corollary}
 No totally incompressible $x$ is $ITRM$-computable or even recognizable. $0^{\prime}_{ITRM}$, the real coding the halting problem for $ITRM$s, is $ITRM$-autoreducible.
\end{corollary}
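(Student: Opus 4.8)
The plan is to prove the three assertions by exhibiting explicit autoreductions.

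\emph{Computable reals.} First I would note that if $x$ is $ITRM$-computable, say by a program $Q$, then the program $P$ which on input $n$ discards its oracle and simply runs $Q$ to output $x(n)$ witnesses $P^{x_{\setminus n}}(n)\downarrow=x(n)$ for all $n$; so $x$ is $ITRM$-autoreducible, in particular not totally incompressible.

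\emph{Recognizable reals.} Next, suppose $x$ is recognized by a program $R$, so that $R^{y}(0)\downarrow=1$ iff $y=x$ and $R^{y}(0)\downarrow=0$ otherwise. Given an input $n$ and oracle $x_{\setminus n}$, let $x^{0},x^{1}$ be the two reals obtained from $x_{\setminus n}$ by re-inserting the bit $0$, resp.\ $1$, at position $n$: thus $x^{i}(k)=x_{\setminus n}(k)$ for $k<n$, $x^{i}(n)=i$ and $x^{i}(k)=x_{\setminus n}(k-1)$ for $k>n$, so oracle queries to $x^{0}$ and $x^{1}$ can be answered uniformly from $x_{\setminus n}$ and $n$. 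The autoreduction $P$ I would use simulates $R^{x^{0}}(0)$ and $R^{x^{1}}(0)$, translating oracle calls as above, and outputs the unique $i$ with output $1$; since $x^{x(n)}=x$ this is $x(n)$. (Under a weaker notion of recognizability guaranteeing only $R^{x}(0)\downarrow=1$, I would instead dovetail the two runs and output the bit of the first to halt with output $1$.) Hence every recognizable real is $ITRM$-autoreducible, so no totally incompressible real is $ITRM$-computable or recognizable.

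\emph{The halting problem.} Finally, write $0'_{ITRM}(e)=1$ iff the $e$th $ITRM$-program halts on the relevant trivial input (if $0'_{ITRM}$ is indexed by pairs, I would run the same argument on the program coordinate). The step to invoke is a padding lemma for the $ITRM$ programming language: from an index $e$ one can effectively compute an index $e'\neq e$ of a program with the same halting behaviour. The autoreduction then, given $n$ and oracle $(0'_{ITRM})_{\setminus n}$, computes such an $e'\neq n$ and outputs the bit of $(0'_{ITRM})_{\setminus n}$ at the position (computable from $n$ and $e'$) that records $0'_{ITRM}(e')$; by the choice of $e'$ this bit equals $0'_{ITRM}(n)$.

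The only genuinely content-bearing point I anticipate is this last padding lemma --- i.e.\ confirming that the fixed enumeration of $ITRM$-programs admits effective padding to a distinct index, which is exactly what lets the autoreduction avoid querying position $n$ itself. The rest (uniform oracle simulation of $x^{0},x^{1}$, and, if one prefers to read the computability clause off the recognizability clause, the implication ``$ITRM$-computable $\Rightarrow$ $ITRM$-recognizable'') is routine bookkeeping.
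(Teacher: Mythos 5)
Your proposal is correct and follows essentially the same route as the paper: ignore the oracle for computable reals, test both re-insertions of the missing bit against the recognizing program, and use effective padding to a distinct index (the paper's ``add a meaningless line'') for $0'_{ITRM}$. The extra bookkeeping you spell out (uniform oracle translation for $x^{0},x^{1}$) is fine and matches what the paper leaves implicit.
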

\begin{proof}
 Clearly, if $P$ computes $x$, then $P$ is also an autoreduction  for $x$. If $x$ is recognizable and $P$ recognizes $x$, we can easily retrieve a 
deleted bit by pluggin in $0$ and $1$ and letting $P$ run on both results to see for which one $P$ stops with output $1$. (The same idea works for
finite subsets instead of single bits.) For $0^{\prime}_{ITRM}$, if a program index $j$ is given, it is easy to determine
some index $i\neq j$ corresponding to a program that works in exactly the same way (by e.g. adding a meaningless line somewhere), so that the remaining
bits allow us to reconstruct the $j$th bit. The autoreducibility of $0^{\prime}_{ITRM}$ also follows from the recognizability of $0^{\prime}_{ITRM}$ (see \cite{Ca2}).
\end{proof}

\begin{defini}
 Let $x\in^{\omega}2$, $i\in\omega$. Then $\text{flip}(x,i)$ denotes the real obtained from $x$ by just changing the $i$th bit, i.e. $x\Delta\{i\}$.
\end{defini}

In the classical setting, no random real is autoreducible. This is still true for $ITRM$s:

\begin{thm}{\label{randomnessimpliestotalincompressibility}}
 If $x$ is $ITRM$-random, then $x$ is totally incompressible. (For the meager as well as for the measure $0$ interpretation of randomness.)
\end{thm}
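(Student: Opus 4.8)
The plan is to argue contrapositively: assuming $x$ is $ITRM$-autoreducible via some program $P$, I will exhibit a single $ITRM$-decidable set that is simultaneously Lebesgue-null and meager and that contains $x$, thereby contradicting both the measure and the meager interpretation of $ITRM$-randomness at once.

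First I would fix the witnessing program $P$ and set
$A_P := \{\, y \in {}^{\omega}2 : P^{y_{\setminus n}}(n)\downarrow = y(n) \text{ for all } n\in\omega \,\}$,
so that $x\in A_P$ by hypothesis. The key structural observation is that $y_{\setminus n}$ does not depend on the $n$th bit of $y$ and that $(\mathrm{flip}(y,n))_{\setminus n}=y_{\setminus n}$; hence $A_P$ can never contain both $y$ and $\mathrm{flip}(y,n)$. Concretely, within a cylinder $[s]$ with $m=|s|$, membership of $y$ in $A_P$ forces the bit $y(m)$ to equal the value $P^{y_{\setminus m}}(m)$, which is independent of $y(m)$ itself. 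From this one gets $\mu(A_P\cap[s])\le \tfrac12\mu([s])$ for every finite binary string $s$. Since $A_P$ is analytic — it is a countable intersection over $n\in\omega$ of the $\Sigma^1_1$ conditions ``$P^{y_{\setminus n}}(n)\downarrow = y(n)$'', each obtained by substituting the continuous function $y\mapsto y_{\setminus n}$ into the $\Sigma^1_1$ halting-with-given-output relation — it is Lebesgue measurable, and the Lebesgue density theorem then forces $\mu(A_P)=0$. For the category version the same flip-invariance failure, transported by the category-preserving homeomorphism of $[s]$ that flips the bit in position $|s|$, shows that $A_P$ is not comeager in any basic clopen set; combined with the Baire property of the analytic set $A_P$ this yields that $A_P$ is meager.

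The heart of the proof is verifying that $A_P$ is $ITRM$-decidable. Given an input real $y$, the decider runs through $n=0,1,2,\dots$; for each $n$ it produces the bits of $y_{\setminus n}$ on demand and must decide whether $P^{y_{\setminus n}}(n)\downarrow$ and, if so, whether the output equals $y(n)$. Here I would invoke the fact from the basic theory of $ITRM$s (\cite{ITRM}, \cite{ITRM2}) that the halting problem for $ITRM$-programs using at most a fixed finite number of registers is $ITRM$-decidable by a machine with sufficiently many registers: since $P$ is one fixed program with, say, $k$ registers, a decider with more than $k$ registers can, for each $n$, determine whether $P^{y_{\setminus n}}(n)$ halts and read off its value. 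It maintains a ``still consistent'' flag, halting and rejecting as soon as some $n$ fails, and halting and accepting once all $n\in\omega$ have passed — the latter event being detectable by the standard $ITRM$ bookkeeping for recognizing that $\omega$ steps have elapsed. Putting the pieces together, $A_P$ is an $ITRM$-decidable set that is both null and meager and contains $x$, so $x$ is $ITRM$-random in neither sense.

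The main obstacle is precisely this decidability step: one must be careful that detecting non-halting of the subcomputations $P^{y_{\setminus n}}(n)$ is genuinely within $ITRM$ power — this is where the bounded-register halting fact is indispensable, and it is the only place the argument uses anything specific to $ITRM$s rather than soft descriptive set theory — and that the $\omega$-fold conjunction over $n$ is organized into a single $ITRM$ computation. Everything else is routine: the flip-pair observation, the density-theorem estimate, and the Baire-category argument are all short. (If one prefers to avoid the density theorem, the bound $\mu(A_P\cap[s])\le\tfrac12\mu([s])$ can instead be iterated down the binary tree to obtain bounds on $\mu(A_P)$ tending to $0$, but invoking density is cleaner.)
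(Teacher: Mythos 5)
Your proposal follows essentially the same route as the paper: pass to the contrapositive, form the set $A_P$ of $P$-autoreducibles, observe that $A_P$ never contains both $y$ and $\mathrm{flip}(y,n)$, prove $ITRM$-decidability via the bounded-register halting-problem solver, and then use the Lebesgue density theorem (resp.\ the Baire property) to conclude $A_P$ is null (resp.\ meager). The one point that is actually wrong is the claim that $A_P$ is analytic because ``$P^{y_{\setminus n}}(n)\downarrow = y(n)$'' is a $\Sigma^1_1$ condition: the $ITRM$ halting relation is not $\Sigma^1_1$ in general (for instance, the set of codes of wellorders is $ITRM$-decidable and $\Pi^1_1$-complete, so for the corresponding program $P$ the relation $P^{y}\downarrow=1$ is not analytic). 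This does not damage the architecture, since measurability and the Baire property follow instead from the fact the paper invokes --- $ITRM$-decidable sets are provably $\Delta^1_2$ --- and your own decidability argument already establishes the hypothesis of that fact; but the justification as written should be replaced accordingly. Your halving bound $\mu(A_P\cap[s])\le\tfrac12\mu([s])$ is a slightly slicker packaging of the paper's explicit construction of the flipped sets $\bar{X}'_0,\bar{X}'_1$, and it has the small advantage of treating the measure and category cases uniformly via the single flip involution of each cylinder.
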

\begin{proof}
 Assume that $x$ is autoreducible via $P$. We show that $x$ is not $ITRM$-random. Let $X$ be the set of all $y$ which are autoreducible via $P$.
Obviously, we have $x\in X$. 
$X$ is certainly decidable: Given $y$, use a halting problem solver for $P$ to see whether $P^{y_{\setminus n}}(n)\downarrow$ for all $n\in\omega$.
If not, then $y\notin X$. Otherwise, carry out these $\omega$ many computations and check the results one after the other.\\
Since $X$ is $ITRM$-decidable, it is provably $\Delta_{2}^{1}$, which implies that $X$ has the Baire property 
and thus is measurable. 

We show that $X$ must be of measure $0$. To see this, assume for a contradiction that $\mu(X)>0$.
Note first, that, whenever $y$ is $P$-autoreducible and $z$ is a real that deviates from $y$ in exactly one digit (say, the $i$th bit), then $z$ is not $P$-autoreducible
(since $P$ will compute the $i$th bit wrongly).

By the Lebesgue density theorem, there is an open basic interval $I$ (i.e. consisting of all reals that start with a certain finite binary string $s$ length $k\in\omega$) such that
the relative measure of $X$ in $I$ is $>\frac{1}{2}$. Let $X^{\prime}=X\cap I$, and let $X^{\prime}_0$ and $X^{\prime}_1$ be the subsets of $X^{\prime}$ consisting 
of those elements that have their $(k+1)$th digit equal to $0$ or $1$, respectively. Clearly, $X^{\prime}_{0}$ and $X^{\prime}_{1}$ are measurable, $X^{\prime}_{0}\cap X^{\prime}_{1}=\emptyset$
and $X^{\prime}=X^{\prime}_{0}\cup X^{\prime}_{1}$. Now define $\bar{X}^{\prime}_{0}$ and $\bar{X}^{\prime}_{1}$ by changing the $(k+1)$th bit of all elements of $X^{\prime}_{0}$ and $X^{\prime}_{1}$,
respectively. Then all elements of $\bar{X}^{\prime}_{0}$ and $\bar{X}^{\prime}_{1}$ are elements of $I$ (as we have not changed the first $k$ bits), none of them is $P$-autoreducible (since they all
deviate from $P$-autoreducible elements by exactly one bit, namely the $k$th), $\bar{X}^{\prime}_{0}\cap\bar{X}^{\prime}_{1}=\emptyset$ (elements of the former set have $1$ as their $(k+1)$th digit,
for elements of $\bar{X}^{\prime}_{1}$ it is $0$) and $\mu(\bar{X}^{\prime}_{0})=\mu(X^{\prime}_{0})$, $\mu(\bar{X}^{\prime}_{1})=\mu(X^{\prime}_{1})$ (as the $\bar{X}^{\prime}_{i}$ are just 
translations of the $X^{\prime}_{i}$). As no element of the $\bar{X}^{\prime}_{i}$ is $P$-autoreducible, we have $(\bar{X}^{\prime}_{0}\cup\bar{X}^{\prime}_{1})\cap X^{\prime}=\emptyset$. Let
$\bar{X}^{\prime}:=\bar{X}^{\prime}_{0}\cup\bar{X}^{\prime}_{1}$.
Then we have\\
 $\mu_{I}(\bar{X}^{\prime})=\mu_{I}(\bar{X}^{\prime}_{0}\cup\bar{X}^{\prime}_{1})=\mu_{I}(\bar{X}^{\prime}_{0})+\mu_{I}(\bar{X}^{\prime}_{1})=\mu_{I}(X^{\prime}_{0})+\mu_{I}(X^{\prime}_{1})=\mu_{I}(X^{\prime})>\frac{1}{2}$ (where
$\mu_{I}$ denotes the relative measure for $I$). So $X^{\prime}$ and $\bar{X}^{\prime}$ are two disjoint subsets of $I$ both with relative measure $>\frac{1}{2}$, a contradiction.\\

For the meager version, we proceed similarly, taking $I$ to be an interval in which $X\cap I$ is comeager instead. That such an $I$ exists can be seen as follows:
Suppose that $X$ is not meager. As above, $X$ is $ITRM$-decidable, hence provably $\Delta_{2}^{1}$ and therefore has the Baire property. Then,
there is an open set $U$
such that $X\setminus U\cup U\setminus X$ is meager. In particular, $U$ is not empty. Hence $X$ is comeager in $U$. As $U$ is open, there is a nonempty open interval $I\subseteq U$.
It is now obvious that $X\cap I$ is comeager in $I$, so $I$ is as desired. We then use the same argument as above, noting that two comeager subsets of $I$ cannot be disjoint.
\end{proof}

\begin{corollary}{\label{Cohengenericsareincompressible}}
 Let $x$ be Cohen-generic over $L_{\omega_{\omega}^{CK}+1}$. Then $x$ is totally incompressible (in the measure sense).
\end{corollary}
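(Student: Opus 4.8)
The plan is to deduce this from Theorem~\ref{randomnessimpliestotalincompressibility}: it then suffices to show that any real $x$ that is Cohen-generic over $L_{\omega_{\omega}^{CK}+1}$ is $ITRM$-random. Since Cohen-genericity is a Baire-category notion, the form of randomness one should aim for is randomness in the \emph{meager} sense; by Theorem~\ref{randomnessimpliestotalincompressibility} this already gives that $x$ is totally incompressible, which is all the statement asserts. (One cannot expect $x$ to be $ITRM$-random in the measure sense: sufficiently sparse dense $G_{\delta}$ null sets are $ITRM$-decidable and contain every Cohen-generic real, so it is the category side of Theorem~\ref{randomnessimpliestotalincompressibility} that is really in play.)

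So let $X$ be an arbitrary $ITRM$-decidable meager set; the goal is $x\notin X$. As in the proof of Theorem~\ref{randomnessimpliestotalincompressibility}, $X$ is provably $\Delta_{2}^{1}$ and therefore has the Baire property. The key point is to localise the Baire-property witness at the correct level: $X$ is defined without real parameters by an $ITRM$-program, every halting $ITRM$-computation halts at an ordinal below $\omega_{\omega}^{CK}$, and hence (using the analysis of $ITRM$-computations from \cite{ITRM2}) membership in $X$ is definable over $L_{\omega_{\omega}^{CK}}$ in a sufficiently absolute fashion that the standard proof that a provably-$\Delta_{2}^{1}$ set has the Baire property — via the Banach--Mazur game, or a Solovay-style unfolding — can be carried out inside $L_{\omega_{\omega}^{CK}+1}$. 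This produces, in $L_{\omega_{\omega}^{CK}+1}$, a Borel code for an open set $U$ and a Borel code for a meager set $M$ with $X\triangle U\subseteq M$. Since $X$ is meager, $U$ is meager, hence $U=\emptyset$ and $X\subseteq M$; as $M$ is a meager set with a Borel code in $L_{\omega_{\omega}^{CK}+1}$, a Cohen-generic real over $L_{\omega_{\omega}^{CK}+1}$ avoids $M$, so $x\notin X$. As $X$ ranged over all $ITRM$-decidable meager sets, $x$ is $ITRM$-random in the meager sense, and Theorem~\ref{randomnessimpliestotalincompressibility} yields that $x$ is totally incompressible.

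The step I expect to be the main obstacle is this localisation — verifying that the Baire-property witness of an $ITRM$-decidable set can genuinely be obtained at the level $L_{\omega_{\omega}^{CK}+1}$, which is precisely where the fine structure of $ITRM$-computation and the analysis of $ITRM$-randomness in \cite{CaSc} and \cite{Ca3} enter; indeed this is essentially already contained there, so a shorter route is to quote from \cite{CaSc} or \cite{Ca3} that every real Cohen-generic over $L_{\omega_{\omega}^{CK}+1}$ is $ITRM$-random in the meager sense and then apply Theorem~\ref{randomnessimpliestotalincompressibility}. One can also bypass the language of randomness entirely: were $x$ autoreducible via some $ITRM$-program $P$, then $x$ would lie in the set $X_{P}$ of all $P$-autoreducible reals, which by the proof of Theorem~\ref{randomnessimpliestotalincompressibility} is $ITRM$-decidable and meager, and the argument above applied to $X_{P}$ would give a contradiction.
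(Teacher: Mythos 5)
Your proof is essentially correct, but it takes a genuinely different route from the paper's. The paper argues directly by forcing: if $P$ autoreduced $x$, the forcing theorem for provident sets gives a condition $p\subseteq x$ with $p\Vdash$ ``$P$ autoreduces $\dot x$''; flipping a bit of $x$ at a position outside $\text{dom}(p)$ yields another generic $x'$ still extending $p$, hence also forced to be $P$-autoreducible --- yet a real differing from a $P$-autoreducible real in exactly one bit is never $P$-autoreducible. That argument never mentions randomness or the Baire property. You instead route the statement through Theorem \ref{randomnessimpliestotalincompressibility} by showing that Cohen-generics over $L_{\omega_{\omega}^{CK}+1}$ are $ITRM$-random in the meager sense; your observation that the measure sense cannot work here is correct and is essentially the content of Theorem \ref{totalincompressibilitydoesnotimplyrandomness}. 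The price of your route is exactly the localisation step you flag: one needs that the forcing relation over $L_{\omega_{\omega}^{CK}+1}$ decides ``$P^{\dot x}\downarrow=1$'' (equivalently, that Cohen forcing over $L_{\omega_{\omega}^{CK}+1}$ preserves the $\omega_{n}^{CK}$, so that $ITRM$-computations on generic oracles are absolute to $L_{\omega_{\omega}^{CK}}[x]$); granting this, the open set $U=\bigcup\{[p]\mid p\Vdash P^{\dot x}\downarrow=1\}$ decides membership in $X$ for all generics, and meagerness of $X$ forces $U=\emptyset$. That fact is indeed available from \cite{CaSc} and \cite{Ca3}, but note that the paper's direct proof quietly relies on the same absoluteness in order to apply the forcing theorem to ``$P$ autoreduces $\dot x$'', so your detour through the Baire-property machinery buys nothing extra here --- it just replaces the one-bit-flip argument applied to $x$ itself by the same flip argument applied inside the proof of Theorem \ref{randomnessimpliestotalincompressibility}. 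Your closing variant (apply the genericity argument to the single decidable meager set $X_{P}$ of $P$-autoreducibles) is the closest of your alternatives to the paper's proof and would be the cleanest way to write it up; the direct argument additionally has the virtue of relativizing immediately, which is how the paper reuses it for Theorems \ref{incompressiblekuceragacs} and \ref{OTMincomprinCohenextension}.
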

\begin{proof}
Let $x$ be as in the assumption, and assume for a contradiction that $x$ is $P$-autoreducible. By the forcing theorem for 
provident sets (see \cite{Ma}), there must be a condition $p\subseteq x$ such that $p\Vdash\text{`}P\text{ autoreduces }\dot{x}\text{'}$, where $\dot{x}$ is a name for
the generic real (i.e. for $\bigcup\dot{G}$, where $\dot{G}$ is the canonical name for the generic filter). Let $i\in\omega\setminus\text{dom}(p)$ and let
$x^{\prime}:=\text{flip}(x,i)$. Then $x^{\prime}$ is still Cohen-generic over $L_{\omega_{\omega}^{CK}+1}$ and, as $p\subseteq x^{\prime}$, $p$ forces the 
$P$-autoreducibility of $x^{\prime}$; however, as $x^{\prime}$ differs from the $P$-autoreducible $x$ by only one bit, $x^{\prime}$ cannot be $P$-autoreducible,
a contradiction. Thus $x$ is not $P$-autoreducible.
\end{proof}

\begin{corollary}{\label{totallycompressiblesarerare}}
 The set of $ITRM$-autoreducible reals has measure $0$.
\end{corollary}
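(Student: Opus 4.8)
The plan is to obtain this as an essentially immediate consequence of Theorem \ref{randomnessimpliestotalincompressibility}, combined with the single new observation that there are only countably many $ITRM$-programs. The work of controlling a \emph{single} autoreduction has already been done in the proof of that theorem; here we just sum it over all programs.

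Concretely, I would fix an enumeration $(P_k)_{k\in\omega}$ of all $ITRM$-programs and set, for each $k$, $X_k:=\{y\in{}^{\omega}2 : P_k \text{ autoreduces } y\}$. The proof of Theorem \ref{randomnessimpliestotalincompressibility} establishes two facts about each $X_k$: it is $ITRM$-decidable (use a halting-problem solver for $P_k$ to test whether $P_k^{y_{\setminus n}}(n)\downarrow$ for all $n$, and if so carry out these computations and compare the outputs with the bits of $y$), hence provably $\Delta^1_2$ and therefore Lebesgue measurable; and the Lebesgue-density/bit-flipping argument shows $\mu(X_k)=0$. Now a real $x$ is $ITRM$-autoreducible precisely when $x\in X_k$ for some $k$, so the set $A$ of all $ITRM$-autoreducible reals is $\bigcup_{k\in\omega}X_k$, a countable union of null sets, whence $\mu(A)=0$. (If one wishes to avoid asserting measurability of $A$ itself, it is enough to note that $A$ is a subset of the null set $\bigcup_k X_k$, and hence has outer measure $0$.)

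Equivalently, one can phrase it contrapositively through the theorem: every $ITRM$-autoreducible real fails to be $ITRM$-random in the measure sense, so $A$ is contained in the set of non-measure-random reals, which is by definition the union of all $ITRM$-decidable null sets; since there are countably many $ITRM$-programs, that union is null, and so is $A$. There is no real obstacle beyond what was already surmounted in Theorem \ref{randomnessimpliestotalincompressibility}; the only genuinely new ingredient is the routine countability of the index set of $ITRM$-programs, which turns the per-program bound $\mu(X_k)=0$ into the global bound $\mu(A)=0$.
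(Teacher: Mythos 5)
Your proof is correct and is essentially identical to the paper's: the paper likewise observes that the argument of Theorem \ref{randomnessimpliestotalincompressibility} gives $\mu(X_P)=0$ for each program $P$ and then takes the countable union over all programs. The extra remarks on measurability and outer measure are fine but not needed beyond what the theorem's proof already supplies.
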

\begin{proof}
 The proof of Theorem \ref{randomnessimpliestotalincompressibility} shows that, for any $ITRM$-program $P$, the set of reals autoreducible via $P$ has measure $0$. As there are only countable
many programs, the result follows.
\end{proof}

\begin{defini}
 Denote by $IC_{ITRM}$ and $RA_{ITRM}$ the set of totally incompressible and $ITRM$-random reals, respectively (in the measure sense, for the time being).
\end{defini}

In this terminology, we showed above that $RA_{ITRM}\subseteq IC_{ITRM}$.
However, the converse of Theorem \ref{randomnessimpliestotalincompressibility} fails:

\begin{thm}{\label{totalincompressibilitydoesnotimplyrandomness}}
 $IC_{ITRM}\neq\subseteq RA_{ITRM}$, i.e. there is a real $x$ such that $x$ is totally incompressible, but not $ITRM$-random (in the measure sense).
\end{thm}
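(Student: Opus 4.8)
The plan is to take an ITRM-random real and ``pad it with zeros''. First I would fix some ITRM-random $y\in{}^{\omega}2$; such a real exists because there are only countably many ITRM-programs, hence only countably many ITRM-decidable sets, hence only countably many ITRM-decidable Lebesgue null sets, so their union is still null and its complement --- the set of ITRM-random reals in the measure sense --- has measure $1$ (see \cite{CaSc}). Then I would define $x\in{}^{\omega}2$ by setting $x(2n)=y(n)$ and $x(2n+1)=0$ for all $n\in\omega$, and claim that this $x$ is totally incompressible but not ITRM-random.

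That $x$ is not ITRM-random is the easy half: the set $N:=\{z\in{}^{\omega}2 : z(2n+1)=0\text{ for all }n\in\omega\}$ is a closed, nowhere dense set of Lebesgue measure $0$; it is ITRM-decidable --- on input $z$ one runs through $n=0,1,2,\dots$ using a counter register, queries the bit $z(2n+1)$, and lets a flag register record, in the usual way via its behaviour at the limit stage $\omega$, whether a $1$ has ever occurred, which is a much simpler instance of the ``$\omega$ many subcomputations plus bookkeeping'' already performed in the proof of Theorem \ref{randomnessimpliestotalincompressibility} --- and $x\in N$. Hence $x$ lies in an ITRM-decidable null set and is not ITRM-random.

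The substantial part, and the main obstacle, is to show that $x$ is nonetheless totally incompressible; here it is crucial that we pad with the \emph{constant} $0$ rather than, say, duplicating bits, which would instead make $x$ autoreducible. Suppose for a contradiction that an ITRM-program $P$ autoreduces $x$; then in particular $P^{x_{\setminus 2n}}(2n)\downarrow=x(2n)=y(n)$ for every $n\in\omega$. The key observation is that the oracle $x_{\setminus 2n}$ is uniformly ITRM-computable from $y_{\setminus n}$: chasing the index shift, the $j$th bit of $x_{\setminus 2n}$ equals $y_{\setminus n}(j/2)$ when $j<2n$ is even, equals $0$ when $j<2n$ is odd or $j=2n$, equals $y_{\setminus n}((j-1)/2)$ when $j>2n$ is odd, and equals $0$ when $j>2n$ is even. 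I would therefore define a program $P'$ which, on oracle $y_{\setminus n}$ and input $n$, simulates $P$ on input $2n$, answering every oracle query of $P$ about a bit of $x_{\setminus 2n}$ by computing the corresponding bit of $y_{\setminus n}$ as above (using that substituting an ITRM-computable oracle into an oracle ITRM-computation again yields an ITRM-computation); since the simulated run halts with output $x(2n)=y(n)$, so does $P'$, so $P'$ autoreduces $y$. But $y$ is ITRM-random, so by Theorem \ref{randomnessimpliestotalincompressibility} it is totally incompressible --- a contradiction. Hence no such $P$ exists, $x$ is totally incompressible, and, being a member of the ITRM-decidable null set $N$, it is not ITRM-random in the measure sense, which is exactly the claim. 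The only delicate points are the oracle-substitution step (which is standard) and getting the index arithmetic of the bit deletion right.
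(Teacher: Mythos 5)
Your proof is correct, but it takes a genuinely different route from the paper. The paper finds a single witness doing double duty: it fixes an $ITRM$-decidable set that is simultaneously comeager and Lebesgue-null (e.g.\ the reals whose digits are not equidistributed), intersects it with the comeager set of Cohen-generics over $L_{\omega_{\omega}^{CK}+1}$, and invokes Corollary \ref{Cohengenericsareincompressible} (a forcing argument) for incompressibility; as a bonus this shows that comeager many reals witness the theorem. You instead start from an $ITRM$-random real $y$ (existence via the countable union of decidable null sets), pad it with zeros on the odd coordinates, witness non-randomness by the very simple decidable null set $N$ of reals vanishing on odd positions, and transfer incompressibility from $y$ to the padded real by the uniform interdefinability of $x_{\setminus 2n}$ and $y_{\setminus n}$, finally invoking Theorem \ref{randomnessimpliestotalincompressibility}. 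Your index bookkeeping is right (the deleted bit $x(2n)=y(n)$ is exactly the bit missing from $y_{\setminus n}$, and the zero padding contributes no information), the oracle-composition step is standard for $ITRM$s, and your aside that duplicating bits instead of zero-padding would destroy the argument is exactly the right sanity check. Your approach buys elementarity --- no forcing over provident sets is needed, only Theorem \ref{randomnessimpliestotalincompressibility} and closure of $ITRM$-computations under composition --- at the cost of producing only one example rather than the comeager family the paper's proof yields.
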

\begin{proof}
Let $X$ be an $ITRM$-decidable, comeager set of Lebesgue measure $0$. That such an $X$ exists is rather easy to see: 
A nice example for a comeager set of measure $0$ is the set of reals for which the zeros and ones in their
binary representation are not equally distributed. It is straightforward to implement a decision procedure for this set on an $ITRM$.
\\
Now, the set of Cohen-generic reals over $L_{\omega_{\omega}^{CK}+1}$ is comeager and hence must intersect $X$. Let $x\in X$ 
be Cohen-generic over $L_{\omega_{\omega}^{CK}+1}$. By Corollary \ref{Cohengenericsareincompressible}, $x$ is totally incompressible.
As $x\in X$ and $X$ is $ITRM$-decidable set of measure $0$, $x$ is not $ITRM$-random (in the measure sense).
Thus $x\in RA_{ITRM}\setminus IC_{ITRM}$, as desired. In fact, the set of these reals is comeager, as the set $C$ of Cohen-generic
reals over $L_{\omega_{\omega}^{CK}+1}$ is comeager, so that $C\cap X$ is also comeager and the proof shows that any element of $C\cap X$ is
of this kind.

\end{proof}

\subsection{Incompressibility and Randomness}

We saw above that the following inclusions hold (where $C^{+}$ denotes the set of Cohen-generic reals over $L_{\omega_{\omega}^{CK}+1})$:
\begin{center}
$C\subsetneq  RA_{ITRM}\subsetneq IC_{ITRM}$
\end{center}
(The first inclusion is proper because genericity for $\Pi_{1}$ and $\Sigma_1$-definable over $L_{\omega_{\omega}^{CK}}$ dense sets is sufficient, but not
every such real is generic over $L_{\omega_{\omega}^{CK}+1}$, which requires intersection with every definable dense set, $Pi_{1}/\Sigma_1$ or not. We do not know whether
$ITRM$-randomness can be characterized in terms of genericity in a natural way.)\\
In this section, we consider the question how similar incompressibility is to randomness, i.e. which of the results obtained for random reals also hold for incompressibles.\\

We start with an incompressible variant of the Kucera-Gacs theorem, which, as we recall, fails for $ITRM$-randomness, as no lost melody (an $ITRM$-recognizable real which is not
$ITRM$-computable; this was shown in \cite{Ca3}) is reducible to a random real.

\begin{thm}{\label{incompressiblekuceragacs}}
For every real $x$, there is a totally incompressible $y$ such that $x\leq_{ITRM} y$.
\end{thm}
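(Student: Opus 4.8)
The plan is to code $x$ into the even bits of $y$ and to exploit the room left over at the odd bits to defeat every potential autoreduction. Fix a recursive join and set $y := x\oplus z$, that is, $y(2n)=x(n)$ and $y(2n+1)=z(n)$, where the real $z\in{}^{\omega}2$ remains to be chosen. The reduction $x\leq_{ITRM}y$ is then immediate and independent of $z$: the $ITRM$-program that on input $n$ reads the $2n$th bit of its oracle and outputs it witnesses $x\leq_{ITRM}x\oplus z$. So the whole task is to choose $z$ so that $y$ is totally incompressible, i.e.\ so that no $ITRM$-program $P$ autoreduces $x\oplus z$.

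To that end I would show that for each fixed $ITRM$-program $P$ the set $Y_P := \{\, z\in{}^{\omega}2 : P \text{ autoreduces } x\oplus z \,\}$ has Lebesgue measure $0$. Measurability of $Y_P$ follows exactly as in the proof of Theorem~\ref{randomnessimpliestotalincompressibility}: given $z$, with $x$ as a fixed oracle, one forms $x\oplus z$ and, using a halting problem solver for $P$, checks whether $P^{(x\oplus z)_{\setminus m}}(m)\downarrow = (x\oplus z)(m)$ for every $m\in\omega$; hence $Y_P$ is $x$-$ITRM$-decidable, so provably $\Delta_{2}^{1}$ in $x$, so measurable. That $\mu(Y_P)=0$ is then the same Lebesgue-density-plus-bit-flip argument as in Theorem~\ref{randomnessimpliestotalincompressibility}: if $\mu(Y_P)>0$, pick a basic interval $J=[s]$ with $\mu_J(Y_P\cap J)>\frac{1}{2}$, let $k$ be the length of $s$, and consider the operation of flipping the $k$th bit; applied to a $z\in J$ it keeps $z$ in $J$ but changes $x\oplus z$ in exactly the $(2k+1)$th bit, and since $(x\oplus z)_{\setminus(2k+1)}$ is untouched by the flip, $P$ computes the $(2k+1)$th bit of the flipped real wrongly, so the flipped $z$ falls outside $Y_P$. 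Thus the measure-preserving involution ``flip the $k$th bit'' carries $Y_P\cap J$ into $J\setminus Y_P$, forcing $\mu_J(Y_P\cap J)\leq\frac{1}{2}$, a contradiction.

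Finally I would assemble the pieces. There are only countably many $ITRM$-programs, so $\bigcup_P Y_P$ is a countable union of null sets, hence null; pick any $z\notin\bigcup_P Y_P$. Then for every $P$ we have $z\notin Y_P$, i.e.\ $P$ does not autoreduce $x\oplus z$, so $y=x\oplus z$ is totally incompressible while $x\leq_{ITRM}y$, as required. The only step that is not pure bookkeeping is the measurability of the sets $Y_P$: this is inherited from the definability analysis of $ITRM$-decidable sets already used in Theorem~\ref{randomnessimpliestotalincompressibility}, and all one needs is that that analysis relativizes to an arbitrary real parameter $x$, which it does. Note that, in contrast to the classical Kucera-Gacs theorem, no ``thick subtree'' navigation is required here: the non-incompressible reals split into countably many null sets, each translation invariant up to a single-bit flip, so $x$ may be coded into $y$ completely freely.
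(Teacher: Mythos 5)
Your proof is correct, but it takes a genuinely different route from the paper's. The paper also passes to the join $x\oplus(\text{second component})$, but it chooses the second component to be Cohen-generic over $L_{\omega_{\omega}^{CK,x}+1}[x]$ and gets incompressibility from the forcing theorem for provident sets: a condition $p$ forcing ``$P$ autoreduces $x\oplus\bigcup\dot{G}$'' would force the same for the generic obtained by flipping one bit outside $\text{dom}(p)$, contradicting the one-bit-flip rigidity of $P$-autoreducibility. You instead relativize the measure argument of Theorem~\ref{randomnessimpliestotalincompressibility}: for each program $P$ the set $Y_{P}$ of $z$ with $x\oplus z$ $P$-autoreducible is null, and your adaptation of the density-plus-involution step is exactly right --- flipping $z(k)$ changes $x\oplus z$ in the single bit $2k+1$ while leaving $(x\oplus z)_{\setminus(2k+1)}$ fixed, so the involution maps $Y_{P}\cap J$ into $J\setminus Y_{P}$ --- and then almost every $z$ works. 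The one step you should spell out is the measurability of $Y_{P}$: you need that decidability by an ITRM with real oracle $x$ still yields provable $\Delta_{2}^{1}$-ness in the parameter $x$ and hence measurability. This relativization does hold and is implicit in the cited randomness papers, but it is the load-bearing hypothesis of your route, and it invokes somewhat heavier machinery than the paper's argument, which only needs genericity over a single countable structure. In exchange your argument yields a stronger conclusion: for every $x$, the set of $z$ with $x\oplus z$ totally incompressible has full measure, not merely that some such $z$ exists (the forcing proof instead exhibits a comeager set of witnesses). Both proofs ultimately rest on the same combinatorial core, namely that changing exactly one bit destroys $P$-autoreducibility.
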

\begin{proof}
 Given $x$, let $y$ be Cohen-generic over $L_{\omega_{\omega}^{CK,x}+1}[x]$ and let $z:=x\oplus y$. Then certainly $x\leq_{ITRM}z$.
Assume that $z$ is $P$-autoreducible for some program $P$. Hence, by the forcing theorem for provident sets \cite{Ma}, there is a condition $p$
such that $p\Vdash$`\v{x}$\oplus\bigcup\dot{G}$ is $P$-autoreducible', where $\dot{G}$ is the canonical name for the generic filter. 
The same hence holds for every $y^{\prime}$ which is Cohen-generic over $L_{\omega_{\omega}^{CK,x}+1}[x]$ with $p\subseteq y^{\prime}$.
Let $i\in\omega\setminus\text{dom}(p)$, $y^{\prime}:=\text{flip}(y,i)$, then $p$ forces the $P$-autoreducibility of $x\oplus y^{\prime}$. 
By absoluteness of computations, $x\oplus y^{\prime}$ is $P$-autoreducible. However, $x\oplus y^{\prime}$ differs from the $P$-autoreducible
$x\oplus y$ in exactly one bit and hence cannot be $P$-autoreducible, a contradiction.
\end{proof}

Also, in contrast to the theorem that $ITRM$-computability from mutually $ITRM$-random reals implies plain $ITRM$-computability, mutually incompressibles can contain
common non-trivial information ($COMP$ denotes the set of $ITRM$-computable reals):

\begin{defini}
$x$ is totally incompressible relative to $y$ ($y$-incompressible, incompressible in $y$) iff there is no program $P$ such that $P^{x_{\setminus n}\oplus y}\downarrow=x(n)$ for all $n\in\omega$.
If $x$ is $y$-incompressible and $y$ is $x$-incompressible, then $x$ and $y$ are mutually incompressible.
\end{defini}

\begin{thm}{\label{mutualincompressibility}}
 There are mutually incompressible reals $y,z$ and a real $x\notin COMP$ such that $x\leq_{ITRM}y$ and $x\leq_{ITRM}z$.
\end{thm}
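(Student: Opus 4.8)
The plan is to obtain $y$ and $z$ by padding a single non-computable real $x$ with two \emph{mutually} Cohen-generic reals, so that any autoreduction of $y$ with the help of $z$ is forced to autoreduce the generic part of $y$ from data that is generic over a model containing it. Concretely: choose any $x\notin COMP$ (such reals exist, since there are only countably many $ITRM$-computable reals); put $M:=L_{\omega_{\omega}^{CK,x}+1}[x]$, which is a countable transitive set; and let $(G_0,G_1)$ be Cohen-generic over $M$ for the product of two copies of Cohen forcing -- equivalently, let $G_0$ and $G_1$ be the even and odd parts of a single Cohen-generic real over $M$. Set $y:=x\oplus G_0$ and $z:=x\oplus G_1$, with $\oplus$ denoting bitwise interleaving. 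Then $x\leq_{ITRM}y$ and $x\leq_{ITRM}z$ (read off the even bits) and $x\notin COMP$, so it only remains to show that $y$ and $z$ are mutually incompressible; by symmetry of the product in $G_0,G_1$ it suffices to show that $y$ is incompressible in $z$.

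Suppose for a contradiction that some $ITRM$-program $P$ autoreduces $y$ in $z$, i.e.\ $P^{y_{\setminus m}\oplus z}(m)\downarrow=y(m)$ for all $m$. I would use only the odd inputs $m=2n+1$, where $y(m)=G_0(n)$. A routine inspection of the interleaving shows that, uniformly in $n$, the oracle $y_{\setminus(2n+1)}\oplus z$ is $ITRM$-equivalent to $(G_0)_{\setminus n}\oplus(x\oplus G_1)$: from $y_{\setminus(2n+1)}$ one recovers all of $x$ together with $G_0$ with its $n$-th bit deleted, and $z$ just \emph{is} $x\oplus G_1$, so the two oracles code each other over $x$. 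Composing $P$ with this equivalence yields one $ITRM$-program $Q$ with $Q^{(G_0)_{\setminus n}\oplus(x\oplus G_1)}(n)\downarrow=G_0(n)$ for every $n$; that is, $G_0$ is autoreducible relative to the oracle $x\oplus G_1$. (That $P$ also handles the even inputs -- trivially, since those bits of $y$ occur verbatim inside $z$ -- does not help, since autoreducibility requires one program correct at \emph{all} inputs.)

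It remains to contradict this by the forcing-and-flip argument of Corollary~\ref{Cohengenericsareincompressible} and Theorem~\ref{incompressiblekuceragacs}, carried out with $x\oplus G_1$ as an oracle parameter. By the product lemma, $G_0$ is Cohen-generic over $M[G_1]=L_{\omega_{\omega}^{CK,x}+1}[x][G_1]$; since adding a Cohen real does not push up the $x$-admissible ordinals, $\omega_{\omega}^{CK,\,x\oplus G_1}=\omega_{\omega}^{CK,x}$, so $L_{\omega_{\omega}^{CK,\,x\oplus G_1}+1}[x\oplus G_1]\subseteq M[G_1]$, whence $G_0$ is in particular Cohen-generic over $L_{\omega_{\omega}^{CK,\,x\oplus G_1}+1}[x\oplus G_1]$. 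By the forcing theorem for provident sets (\cite{Ma}) there is a condition $p\subseteq G_0$ forcing that the generic real is autoreducible via $Q$ relative to $x\oplus G_1$; choosing $i\in\omega\setminus\text{dom}(p)$ and $G_0':=\text{flip}(G_0,i)$, the real $G_0'$ is still Cohen-generic and extends $p$, so by absoluteness of $ITRM$-computations $G_0'$ is autoreducible relative to $x\oplus G_1$ via $Q$ as well. But $(G_0')_{\setminus i}=(G_0)_{\setminus i}$ while $G_0'(i)=1-G_0(i)$, so $Q$, from the same oracle and input, would compute both $G_0(i)$ (correctly, since $Q$ autoreduces $G_0$) and $G_0'(i)=1-G_0(i)$ -- a contradiction. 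The same argument with $G_0$ and $G_1$ interchanged shows $z$ is incompressible in $y$, completing the proof.

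The delicate point is the ordinal bookkeeping in the previous paragraph: making sure $G_0$ is generic over a model tall enough for the absoluteness of $ITRM$-computations with the various oracles built from $x$, $G_1$ and $(G_0)_{\setminus n}$, i.e.\ that the product Cohen forcing leaves the $\omega_{\omega}^{CK}$-type ordinals fixed; once this standard preservation fact is in hand, the relativized forms of the earlier results apply verbatim and everything else -- the reduction of autoreducibility of $y$ in $z$ to autoreducibility of $G_0$ in $x\oplus G_1$, and the flip -- is routine. It is worth stressing the advertised contrast: the van Lambalgen-type fact for randomness, that $ITRM$-computability from two mutually $ITRM$-random reals collapses to plain $ITRM$-computability, genuinely fails here, because mutual incompressibility is far too weak to forbid the padded reals $y$ and $z$ from jointly carrying the non-trivial information $x$.
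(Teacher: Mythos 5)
Your proposal is correct and follows essentially the same route as the paper: take $y',z'$ (your $G_0,G_1$) mutually Cohen-generic over $L_{\omega_{\omega}^{CK,x}+1}[x]$, set $y:=x\oplus G_0$, $z:=x\oplus G_1$, and run the forcing-and-flip argument of Theorem~\ref{incompressiblekuceragacs} relative to the other real. You supply the details the paper leaves implicit -- the uniform reduction of autoreducibility of $y$ in $z$ to autoreducibility of $G_0$ relative to $x\oplus G_1$, the product lemma, and the preservation of $\omega_{\omega}^{CK,x}$ under Cohen forcing -- all of which are handled correctly.
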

\begin{proof}
 Let $y^{\prime}$, $z^{\prime}$ be mutually Cohen-generic over $L_{\omega_{\omega}^{CK,x}+1}[x]$, $y:=x\oplus y^{\prime}$, $z:=x\oplus z^{\prime}$ and apply the reasoning
of the proof of Theorem \ref{incompressiblekuceragacs}.

\end{proof}

\section{Ordinal Turing Machines}

See \cite{Ko} for an introduction to ordinal Turing machines.
For $OTM$s without parameters, define the notions of autoreducibility and total incompressibility as above for $ITRM$s. It turns out that there are no totally incompressible reals in $L$:\\

\begin{thm}{\label{noOTMincompressibles}}
Assume $V=L$. Then there are no totally $OTM$-incompressible reals.\end{thm}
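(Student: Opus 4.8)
The plan is to prove the statement in its positive form: assuming $V=L$, I would show that \emph{every} real is $OTM$-autoreducible. The key structural input is that, under $V=L$, a parameter-free $OTM$ can enumerate the reals in the order of the constructible wellordering $<_L$ — it builds $L$ level by level and lists each real at the stage where it first appears, breaking ties inside a level by the canonical definability order — and no ordinal parameter is needed for this.

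So fix a real $x$. Write $y\sim y'$ for ``$y$ and $y'$ differ in only finitely many bits'', and let $x_{\min}$ be the $<_L$-least real with $x_{\min}\sim x$; this exists because $<_L$ wellorders the reals. Put $S=\{i\in\omega:x(i)\neq x_{\min}(i)\}$; by the choice of $x_{\min}$ this set is \emph{finite}, so it can be hard-coded into a finite program. I would then take $P$ to be the parameter-free $OTM$-program that, on input $n$ with oracle $z$, proceeds as follows. First it reconstructs from $z$ and $n$ one of the (at most two) reals $w$ with $w_{\setminus n}=z$; call it $c$. Since $c$ can differ from $x$ only at position $n$, we have $c\sim x$ and hence $c\sim x_{\min}$. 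Next it runs through the reals in $<_L$-increasing order, testing each real $u$ for ``$u\sim c$'', and outputs the first $u$ that passes; this $u$ is exactly $x_{\min}$, because $c\sim x$ puts $c$ in the same $\sim$-class as $x$ and $x_{\min}$ is the $<_L$-least member of that class. Finally it returns $x_{\min}(n)$ if $n\notin S$ and $1-x_{\min}(n)$ if $n\in S$. By the definition of $S$ this is $x(n)$, so $P^{x_{\setminus n}}(n)\downarrow=x(n)$ for every $n$, i.e.\ $P$ autoreduces $x$; as $x$ was arbitrary, there is no totally $OTM$-incompressible real.

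The remaining points are mostly routine bookkeeping. The test ``$u\sim c$'' must be $OTM$-decidable from $u$ and $c$: its negation, ``$u$ and $c$ disagree beyond every $N$'', is $\Pi^0_2$, and is decided by the machine that, for $N=0,1,2,\dots$, scans all $m\ge N$ (taking $\omega$ steps) for a disagreement and halts as soon as some $N$ yields none — a halting computation of length at most $\omega^2$. One must also check that the search terminates: every real $<_L$-below $x_{\min}$ lies outside the $\sim$-class of $c$ and is therefore correctly rejected in at most $\omega^2$ steps, and $x_{\min}$ is eventually reached because, being a finite modification of the oracle-reconstructed $c$, it is $OTM$-computable from $c$ and so appears at a level of the $<_L$-enumeration that the machine attains. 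I expect the main obstacle to be precisely this halting verification, together with checking that the $<_L$-enumeration is genuinely parameter-free; the conceptual core is the simple observation that a finite ``correction set'' $S$, applied to the canonical representative $x_{\min}$ of the finite-difference class, lets one fixed finite program recover $x$ from any $x_{\setminus n}$. Note that $V=L$ enters essentially through the $OTM$-computability of a wellordering of the reals, which is what makes $x_{\min}$ accessible; this is consistent with Corollary~\ref{totallycompressiblesarerare}, since the measurability argument used there has no analogue for parameter-free $OTM$-decidable sets under $V=L$.
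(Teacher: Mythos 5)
Your proposal is correct and uses essentially the same idea as the paper: under $V=L$ one computes the $<_L$-least representative of the finite-difference class of the oracle and hard-codes the finite correction set into the program (the paper packages this as a countable partition into decidable sets $X_i$ indexed by the finite difference $S_i$, then uses the ``plug in $0$ and $1$'' test, whereas you observe directly that the representative is insensitive to the missing bit). The only difference is presentational, and your halting and decidability checks match what the paper's decision procedure implicitly requires.
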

\begin{proof}
Let $x\in L$. Our goal is to define a countable sequence $(P_{i}|i\in\omega)$ of programs deciding pairwise disjoint sets $(X_{i}|i\in\omega)$
with $\bigcup_{i\in\omega}X_{i}=\mathfrak{P}(\omega)$ such that if $y,z\in X$ differ only in finitely many bits, $y$ and $z$ are not in the same $X_{i}$.
Once that is done, the proof is easy to finish: There is some $i\in\omega$ such that $x\in X_i$, without loss of generality let $i=0$. Then $X_{0}$
is decided by $P_{0}$. Now an autoreduction for $x$ works as follows: Given $n\in\omega$ and $x_{\setminus n}$, plug $0$ and $1$ in for
the $i$th bit in $x_{\setminus n}$, getting reals $x_{0}$ and $x_{1}$, respectively, one of which is equal to $x$. Now use $P_{0}$ to decide whether $x_{0}\in X_{0}$
or $x_{1}\in X_{0}$. As $x_{0}$ and $x_{1}$ only differ in one bit and $X_{0}$ does not contain two (distinct) reals differing in only finitely many places,
only one of $x_{0}$ and $x_{1}$ can be an element of $X_{0}$, and that is $x$, determining the $n$th digit of $x$.\\
Now we construct $(P_{i}|i\in\omega)$  as follows: Let $(S_{i}|i\in\omega)$ be a natural enumeration of the finite sets of integers in order type $\omega$.
Write $y\sim z$ iff $y$ and $z$ differ only in finitely many bits. For a real $a$, denote by $[a]_{0}$ the $<_{L}$-smallest real such that $[a]_{0}\sim a$.
Then let $X_{i}:=\{[y]_{0}+_{b}S_{i}|y\in\mathfrak{P}^{L}(\omega\}$, where $+_{b}$ denotes the bitwise sum. Clearly, this is a countable partition of the constructible reals.
Furthermore, there is a decision procedure for $X_{i}$ on an $OTM$ (which is in fact uniform in $i$) which works as follows:
Given a real $a$ in the oracle, we can write $L$ on the tape until we arrive an $L$-level $L_{\alpha}\ni a$.
Then, searching $L_{\alpha}$, we can identify $[a]_{0}$. Now compute the set $S$ of bits where $a$ and $[a]_{0}$ differ and compare it to our enumeration of finite
subsets of $\omega$ fixed above: If $S=S_{i}$, then $a\in X_{i}$, otherwise $a\notin X_{i}$.
\end{proof}

Note that there are constructible reals which do not lie in any $OTM$-decidable null set, as the union $Y$ of all $OTM$-decidable null sets is an element in $L$
and, as a countable union of null sets, also a null set in $L$. Hence, at least in $L$, not every (parameter-free) $OTM$-random real is totally incompressible.\\

Note that the situation will probably be quite different for Infinite Time Turing Machines ($ITTM$s), as they have neither the power to enumerate $L$ nor the ability to solve their own restricted halting program (like $ITRM$s).\\

The $V=L$ hypothesis is probably unnecessarily strong here. However, even in rather mild extensions of $L$, $OTM$-incompressibles do exist:

\begin{thm}{\label{OTMincomprinCohenextension}}
 Let $x$ be Cohen-generic over $L$. Then $x$ is $OTM$-incompressible in $L[x]$ (and hence, by absoluteness of computations, in the real world).
\end{thm}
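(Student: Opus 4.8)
The plan is to replay the forcing argument from the proof of Corollary~\ref{Cohengenericsareincompressible} (see also Theorem~\ref{incompressiblekuceragacs}), but now with Cohen forcing $\mathbb{C}=(2^{<\omega},\supseteq)$ over all of $L$ in place of forcing over $L_{\omega_{\omega}^{CK}+1}$, and with absoluteness of $OTM$-computations in the role played there by the forcing theorem for provident sets. So, working in $L[x]$, suppose for a contradiction that $x$ is $P$-autoreducible for some parameter-free $OTM$-program $P$, i.e. $P^{x_{\setminus n}}(n)\downarrow=x(n)$ for all $n\in\omega$. Since $x$ is Cohen-generic over $L$ and $\mathbb{C}$ is a set forcing definable over $L$, the ordinary forcing theorem (for set forcing over the class model $L$) provides a condition $p\subseteq x$, i.e. a finite binary string which is an initial segment of $x$, with $p\Vdash^{L}_{\mathbb{C}}\text{`}P\text{ autoreduces }\dot{x}\text{'}$, where $\dot{x}$ is the canonical name for $\bigcup\dot{G}$ and $\dot{G}$ is the canonical name for the generic filter.

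For this step to be legitimate, one should check that ``$P$ autoreduces $\dot{x}$'' really is a statement of the forcing language. The point is that ``$P^{a}(n)\downarrow=b$'' asserts the existence of a halting run of $P$ on oracle $a$ with output $b$, and such a run is a \emph{set} (a bounded sequence of tape-and-head configurations) whose being a halting run with the prescribed input and output is checked by a $\Delta_0$ formula; since the run, when it exists, is determined by a definable recursion from $P$ and $a$ (uniformly, with reference only to ordinals), the predicate ``$P$ autoreduces $a$'' is unproblematic both in the forcing language and as regards absoluteness between $L[x]$ and $V$, which share all ordinals and both model $\mathrm{ZF}$. For parameter-free $OTM$s there are moreover no ordinal parameters to track. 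This is the only place where anything beyond bookkeeping happens, and it is exactly the absoluteness of $OTM$-computability recorded in \cite{Ko}.

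Now pick $i\in\omega\setminus\text{dom}(p)$ and set $x':=\text{flip}(x,i)$. Since flipping the $i$th coordinate of those conditions whose domain contains $i$ (and fixing all others) is an automorphism of $\mathbb{C}$, the real $x'$ is again Cohen-generic over $L$, with $L[x']=L[x]$; and $p\subseteq x'$ because $p$ does not mention coordinate $i$. Hence the displayed forcing statement gives $L[x']=L[x]\models\text{`}P\text{ autoreduces }x'\text{'}$. But $x'$ and $x$ agree off coordinate $i$, so $x'_{\setminus i}=x_{\setminus i}$, and therefore $P^{x'_{\setminus i}}(i)=P^{x_{\setminus i}}(i)=x(i)\neq x'(i)$, contradicting the $P$-autoreducibility of $x'$. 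So no program autoreduces $x$ in $L[x]$, i.e. $x$ is $OTM$-incompressible in $L[x]$. Finally, for each fixed $P$ the $\omega$ computations $P^{x_{\setminus n}}(n)$ have oracles in $L[x]$ and are absolute between $L[x]$ and $V$, while a witnessing program would be the same natural number in either model; hence $x$ is $OTM$-incompressible in $V$ as well.
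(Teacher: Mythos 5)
Your proposal is correct and follows essentially the same route as the paper: assume $P$-autoreducibility, obtain a condition $p$ forcing it via the forcing theorem over $L$, flip a bit outside $\mathrm{dom}(p)$ to get another generic $x'$ that $p$ still forces to be $P$-autoreducible, and derive a contradiction from $x'_{\setminus i}=x_{\setminus i}$. The extra details you supply (the automorphism argument for genericity of $x'$, the expressibility of autoreducibility in the forcing language, and the absoluteness of $OTM$-computations between $L[x]$ and $V$) are all sound and merely make explicit what the paper leaves implicit.
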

\begin{proof}
Assume for a contradiction that $x$ is $OTM$-autoreducible, say by the program $P$, where $x=\bigcup{G}$ and $G$ is a Cohen-generic
filter over $L$.
Then there is a finite $p\in G$ such that $p\Vdash\forall{n\in\omega}P^{x_{\setminus n}}(n)\downarrow=x(n)$.
Let $i\in \omega\setminus\text{dom}(p)$, $x^{\prime}:=\text{flip}(x,i)$. Then $x^{\prime}\in L[x]$ is still Cohen-generic over 
$L$ and $p\subseteq x^{\prime}$ so that $p\Vdash \forall{n\in\omega}P^{x^{\prime}_{\setminus n}}(n)\downarrow=x^{\prime}(n)$.
However, flipping a single bit cannot preserve $P$-autoreducibility, a contradiction. Hence $x$ is $OTM$-incompressible.
\end{proof}

Taking Theorem \ref{noOTMincompressibles} and Theorem \ref{OTMincomprinCohenextension} together, we get:

\begin{corollary}{\label{OTMincomprindependent}}
The existence of (parameter-free) $OTM$-incompressible reals is independent from $ZFC$.
\end{corollary}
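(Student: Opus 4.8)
The plan is to deduce Corollary \ref{OTMincomprindependent} directly from the two preceding theorems, which together pin down the two sides of the independence statement. Recall that a statement $\varphi$ is independent from $ZFC$ precisely when neither $ZFC\vdash\varphi$ nor $ZFC\vdash\neg\varphi$; equivalently (granting $\mathrm{Con}(ZFC)$), there is a model of $ZFC$ in which $\varphi$ holds and a model of $ZFC$ in which $\varphi$ fails. Here $\varphi$ is the assertion ``there exists a parameter-free $OTM$-incompressible real''. So the entire argument is a matter of exhibiting the two required models.

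First I would invoke Theorem \ref{noOTMincompressibles}: under the hypothesis $V=L$ there are no totally $OTM$-incompressible reals, so the constructible universe $L$ is a model of $ZFC$ in which $\varphi$ fails. Since $ZFC\vdash\mathrm{Con}(ZFC)\to\mathrm{Con}(ZFC+V=L)$ (relative consistency of $V=L$ via the inner model $L$), this shows $ZFC\nvdash\varphi$. Second, I would invoke Theorem \ref{OTMincomprinCohenextension}: taking $x$ Cohen-generic over $L$, the model $L[x]$ satisfies $ZFC$ and contains an $OTM$-incompressible real, namely $x$ itself; hence $\varphi$ holds in $L[x]$. Cohen forcing preserves $ZFC$ and, by the usual relative consistency of set forcing, $\mathrm{Con}(ZFC)$ yields $\mathrm{Con}(ZFC+\varphi)$, so $ZFC\nvdash\neg\varphi$. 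Combining the two, $\varphi$ is independent of $ZFC$ (modulo the inevitable assumption that $ZFC$ is consistent, which is implicit throughout).

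There is essentially no obstacle here; the work has all been done in the two cited theorems, and what remains is only the standard observation that a sentence witnessed true in one $ZFC$-model and false in another is independent. The one point deserving a word of care is that the witness in Theorem \ref{OTMincomprinCohenextension} lives in $L[x]$ rather than $V$, and that incompressibility of $x$ transfers upward by absoluteness of $OTM$-computations (as already noted in the statement of that theorem); but since independence is about provability from $ZFC$, it suffices to have produced the two models $L$ and $L[x]$, and no appeal to $V$ is needed. I would therefore write the proof as a short two-sentence deduction: ``By Theorem \ref{noOTMincompressibles}, $L\models ZFC+$`there is no $OTM$-incompressible real'. By Theorem \ref{OTMincomprinCohenextension}, if $x$ is Cohen-generic over $L$ then $L[x]\models ZFC+$`there is an $OTM$-incompressible real'. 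Hence the existence of such reals is neither provable nor refutable in $ZFC$.''
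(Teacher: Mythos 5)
Your proposal is correct and matches the paper, which gives no separate proof but simply notes that the corollary follows from "taking Theorem \ref{noOTMincompressibles} and Theorem \ref{OTMincomprinCohenextension} together" — exactly the two-model argument ($L$ versus $L[x]$) you spell out. Your added remarks on relative consistency and on absoluteness making the appeal to $V$ unnecessary are accurate but not needed beyond what the paper already implies.
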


Consequently, the analogue of Theorem \ref{randomnessimpliestotalincompressibility} for $OTM$s fails at least consistently: Every
constructible $OTM$-random real provides a counterexample.

For an $OTM$-program $P$, the set of $P$-autoreducibles is in general not decidable:

\begin{thm}{\label{OTMPcompressibilityundecidable}}
Assume that $V=L$. Then there are $OTM$-programs $P$ such that $X_{P}:=\{x\mid \forall{n\in\omega}P^{x_{\setminus n}}(n)\downarrow=x(n)\}$ (i.e. the set of $P$-autoreducibles) is not $OTM$-decidable.
\end{thm}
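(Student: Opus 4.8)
The plan is to exhibit an $OTM$-program $P$ for which the autoreducibility question is as hard as possible, by diagonalizing against all $OTM$-decision procedures. Recall that, working in $L$, we have a $<_L$-enumeration of all $OTM$-programs $(Q_i \mid i \in \omega)$; for each $i$, the set $D_i$ of reals on which $Q_i$ halts with output in $\{0,1\}$ on every input is a definable class, and if we restrict attention to those $i$ for which $Q_i$ does decide some set, we get an enumeration of all $OTM$-decidable sets. The idea is to build $P$ so that $X_P$ meets and avoids each candidate $X_{Q_i}$ in a controlled way, making $X_P \neq D_i$ for every $i$. Since $OTM$s can enumerate $L$ and evaluate each $Q_i$ on given oracles, we have enough uniform computational access to the list $(Q_i)$ to carry out such a diagonalization internally to the definition of $P$.

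Concretely, first I would fix the block structure used in Theorem~\ref{noOTMincompressibles}: partition $\omega$ into consecutive finite blocks $B_0, B_1, \dots$, where $B_i$ is reserved for coding the $i$th diagonalization requirement. The program $P$, given $x_{\setminus n}$ and $n$, first locates (by writing $L$ on the tape until the oracle appears) the least $L$-level containing the relevant data, then determines which block $B_i$ the index $n$ falls in. On most coordinates $P$ behaves like the autoreduction from Theorem~\ref{noOTMincompressibles} — using the $\sim$-equivalence-class representative $[a]_0$ and the fixed enumeration $(S_i)$ of finite sets — so that a large comeager (indeed, co-null) family of reals lands in $X_P$ and $P$ genuinely autoreduces them. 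But on the coordinates in $B_i$, $P$ consults $Q_i$: it arranges, using the membership behavior of $Q_i$ on the two one-bit variants $x_0, x_1$ of $x_{\setminus n}$, that $P$ succeeds on $x$ precisely when $Q_i$ would \emph{fail} to witness membership in whatever set it decides, and vice versa. The effect is that for every $i$ there is a real $x^{(i)}$ (constructed explicitly by choosing its bits in $B_i$ according to $Q_i$'s answers) with $x^{(i)} \in X_P \iff x^{(i)} \notin X_{Q_i}$; hence $X_P$ differs from every $OTM$-decidable set, so $X_P$ is not $OTM$-decidable.

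The step I expect to be the main obstacle is making the self-reference clean: $P$ must, while processing oracle $x_{\setminus n}$ at coordinate $n \in B_i$, simulate $Q_i$ on the \emph{reconstructed} reals $x_0, x_1$, and the diagonal real $x^{(i)}$ must be definable in terms of $Q_i$'s behavior in a way that does not collapse (one must ensure $Q_i$ on $x^{(i)}_{\setminus n}$ for $n \in B_i$ actually halts, or handle divergence as a separate case — divergence of $Q_i$ can simply be counted as ``$Q_i$ does not decide the membership correctly,'' which is fine since then $Q_i$ decides no set at all and imposes no constraint). There is also a bookkeeping point: $P$ as a whole should still be a \emph{single} well-defined program whose behavior on a given $x_{\setminus n}$ depends only on $n$ and the oracle, which is automatic since the block index $i$ is computable from $n$ and the simulation of $Q_i$ is uniform. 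Finally I would double-check that $X_P$ is nonempty and that $P$ does autoreduce its members — this follows because on the ``generic'' part of its domain $P$ imitates the working autoreduction of Theorem~\ref{noOTMincompressibles}, and the $B_i$-modifications only perturb finitely many coordinates for each fixed $i$-requirement, so a suitable real satisfying all the $B_i$-constraints simultaneously still exists in $L$ by a straightforward construction threading through the blocks.
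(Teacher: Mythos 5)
Your overall strategy --- explicitly building a single $P$ and diagonalizing $X_P$ against an enumeration of all $OTM$-decision procedures --- is genuinely different from the paper's argument, and as it stands it has gaps that the sketch does not close. First, the base of your construction is wrong: there is no single program from Theorem \ref{noOTMincompressibles} that autoreduces a co-null set of reals. The construction there is non-uniform (one program per finite-difference pattern $S_i$), and the set of reals autoreduced by any one such program is a Vitali-type selector of the finite-difference classes, which the paper's final Proposition points out is non-measurable; indeed the density/one-bit-flip argument of Theorem \ref{randomnessimpliestotalincompressibility} shows $X_P$ can never be measurable of positive measure, so ``co-null'' is impossible. Second, and more seriously, the diagonalization is circular and the requirements interfere. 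The witness $x^{(i)}$ is to be defined ``according to $Q_i$'s answers,'' but the relevant answer is $Q_i$'s verdict on $x^{(i)}$ itself, so you need a fixed point that you never produce; and the action you prescribe on block $B_i$ is taken for \emph{every} oracle real at coordinates in $B_i$, not just for the designated witness, so whether $x^{(i)}$ lands in $X_P$ depends on the verdicts of \emph{all} the $Q_j$ on $x^{(i)}$, not only on $Q_i$'s. The intended equivalence $x^{(i)}\in X_P\iff x^{(i)}\notin X_{Q_i}$ therefore does not follow, and ``threading through the blocks'' does not help, since each real meets every block. (There is also the unaddressed case in which $Q_i$ returns different verdicts on the two one-bit reconstructions $x_0,x_1$, so that $P$ cannot tell which verdict is ``$Q_i$ on $x$.'')

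The paper avoids all of this with a short indirect argument: if every $X_P$ were decidable, then by the Lebesgue-density/one-bit-flip argument of Theorem \ref{randomnessimpliestotalincompressibility} each $X_P$ would be a null set; since there are only countably many programs, the autoreducible reals would then form a null set, so almost every real would be totally incompressible --- contradicting Theorem \ref{noOTMincompressibles}, which says that under $V=L$ \emph{every} real is $OTM$-autoreducible. No explicit $P$ is produced. A constructive proof along your lines would have to resolve the fixed-point and interference problems above (e.g.\ via a recursion-theorem style argument), which is substantially more work than the sketch provides.
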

\begin{proof}
Assume for a contradiction that $X_{P}$ is decidable for every $P$. By the same argument as in the proof of Theorem \ref{randomnessimpliestotalincompressibility} then,
$\mu(X_{P})=0$ for every $P$. Consequently, no $OTM$-autoreducible real is $OTM$-random, and hence, every $OTM$-random real is $OTM$-incompressible.
However, the non-$OTM$-random reals are contained in a countable union of decidable null sets and hence form a null set themselves, so
that the $OTM$-random reals have full measure, while, on the other hand, $OTM$-incompressibles do not exist in $L$, a contradiction.
\end{proof}

Note, however, that $P$-autoreducibility for $OTM$s is semidecidable by simply simultaneously running all $OTM$-programs on a real $x$ and checking whether one
of them is an autoreduction. If such a program exists, it will eventually be found; otherwise, the search will not halt.\\

We note further that such sets are in general not measurable:

\begin{prop}
Assume $V=L$. Then there is an $OTM$-program $P$ such that the set of $P$-autoreducible reals is not measurable. In fact, there is a recursive set $I\subseteq\omega$
such that $\forall{x}P_{i}^{x}\downarrow=0\vee P_{i}^{x}\downarrow=1$, $S_{i}:=\{x\mid P_{i}^{x}\downarrow=1\}$ is not measurable, $S_{i}\cap S_{j}=\emptyset$ for $i\neq j$
 and $\mathfrak{P}(\omega)=\bigcup_{i\in\omega}S_{i}$.
\end{prop}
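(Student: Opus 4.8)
The plan is to recycle the partition from the proof of Theorem~\ref{noOTMincompressibles}. Fix there an enumeration $(\sigma_i\mid i\in\omega)$ of the finite subsets of $\omega$ (with $\sigma_0=\emptyset$), write $\sim$ for eventual equality of reals, and for a real $a$ let $[a]_0$ be the $<_L$-least real with $[a]_0\sim a$; recall the sets $X_i:=\{[y]_0+_b\sigma_i\mid y\in\mathfrak{P}^L(\omega)\}$, which we take to be the sets $S_i$ of the present statement. As established there, each $X_i$ is decided, uniformly in $i$, by an $OTM$-program $P_i$ which halts on every oracle with output in $\{0,1\}$; the $X_i$ are pairwise disjoint; and, since $V=L$ gives $\mathfrak{P}^L(\omega)=\mathfrak{P}(\omega)$, we have $\bigcup_{i\in\omega}X_i=\mathfrak{P}(\omega)$. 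Letting $I$ be the increasing enumeration of the range of the (recursive) index function $i\mapsto P_i$ --- or, after a harmless recursive reindexing, $I=\omega$ --- every structural clause of the statement is already in place, so only the non-measurability of the $X_i$ remains to be proved.

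For that, note that $X_0=\{[y]_0\mid y\in{}^{\omega}2\}$ is a complete set of representatives for $\sim$, i.e.\ a Vitali-type transversal for the countable subgroup $F$ of finitely supported members of the compact group $({}^{\omega}2,+_b)$. The standard Vitali argument carries over verbatim: the translates $X_0+_b s$ for $s\in F$ are pairwise disjoint (two representatives in the same $\sim$-class coincide) and cover ${}^{\omega}2$, so, Lebesgue measure being the Haar measure of $({}^{\omega}2,+_b)$ and hence translation invariant, $X_0$ cannot be measurable, as otherwise $\mu({}^{\omega}2)$ would be $0$ (if $\mu(X_0)=0$) or $\infty$ (if $\mu(X_0)>0$). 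Since each $X_i=X_0+_b\sigma_i$ is a translate of $X_0$, each $X_i$ is non-measurable as well; this is the ``in fact'' part.

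For the first assertion, let $P$ be the autoreduction of $X_0$ built in the proof of Theorem~\ref{noOTMincompressibles}: on input $(x_{\setminus n},n)$ it forms the two reals $x_0,x_1$ obtained by re-inserting a $0$, respectively a $1$, at position $n$ (so exactly one of them equals $x$, namely the one whose re-inserted bit is $x(n)$), runs $P_0$ on both, and outputs the re-inserted bit of whichever of $x_0,x_1$ lies in $X_0$, diverging if neither does. The plan is to check that $\{x\mid x\text{ is }P\text{-autoreducible}\}=X_0$. For $x\in X_0$ this is exactly the computation already carried out there: $x$ itself lies in $X_0$ while its single-bit variant does not, so $P^{x_{\setminus n}}(n)=x(n)$ for every $n$. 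For $x\notin X_0$: for every $n$ the reals $x_0$ and $x_1$ differ in the single bit $n$, hence are $\sim$-equivalent, so at most one of them lies in $X_0$; and since the one equal to $x$ does not, $P^{x_{\setminus n}}(n)$ either diverges or returns $1-x(n)\neq x(n)$. Thus $x$ fails the autoreduction test already at $n=0$, so $x$ is not $P$-autoreducible; hence $\{x\mid x\text{ is }P\text{-autoreducible}\}=X_0$, which is non-measurable.

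Not much here is genuinely hard, since the combinatorial and recursion-theoretic content is inherited from Theorem~\ref{noOTMincompressibles}. The two points requiring care are the exact identification $\{x\mid x\text{ is }P\text{-autoreducible}\}=X_0$ --- in particular the direction ``$x\notin X_0\Rightarrow x$ not $P$-autoreducible'', which uses that $X_0$ meets each $\sim$-class exactly once --- and checking that the Vitali-style non-measurability argument is legitimately applicable in $({}^{\omega}2,+_b)$ with Lebesgue measure viewed as Haar measure. A minor bookkeeping matter is arranging that the index set $I$ is recursive rather than merely recursively enumerable, which is handled by passing to the increasing enumeration of the range of $i\mapsto P_i$.
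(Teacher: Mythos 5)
Your proof is correct and takes essentially the same approach as the paper: both recycle the $<_{L}$-least-representative partition of Theorem \ref{noOTMincompressibles} and observe that its pieces are Vitali-type transversal translates, hence non-measurable. You merely spell out two things the paper leaves implicit --- the translation-invariance argument for non-measurability (which the paper dismisses as well-known) and the exact identification of the set of $P$-autoreducibles with $X_{0}$ --- and both of these are carried out correctly.
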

\begin{proof}
Let $(s_{i}|i\in\omega)$ be an enumeration of $^{<\omega}\omega$ in order type $\omega$, denote by $x\sim y$ that $x$ and $y$ differ only at finitely many places, let $[x]_{\sim}$
be the $\sim$-equivalence class of $x$
and let $P_{i}$ be the program described in the proof of Theorem \ref{noOTMincompressibles} that works as follows:
Given $x$ in the oracle, determine the $<_{L}$-minimal representative $x_{0}$ of $[x]_{\sim}$, then output $x_{0}\Delta s_{i}$ (where $\Delta$ denotes the symmetric difference, that is
we flip all the bits at places in $s_{i}$). Denoting, for $i\in\omega$, $E_{i}:=\{x\mid x=x_{0}\Delta s_{i}\}$, we have that $\mathfrak{P}(\omega)=\bigcup_{i\in\omega}E_{i}$,
$E_{i}\cap E_{j}=\emptyset$ for $i\neq j$ and $P_{i}$ decides $E_{i}$ for all $i,j\in\omega$. Furthermore, it is well-known that none of the $E_i$ is measurable.
\end{proof}

\section{Acknowledgements}
We are indebted to Philipp Schlicht for several very helpful discussions on the subject and in particular for the proof idea for Theorem \ref{totalincompressibilitydoesnotimplyrandomness}.

\end{document}